\documentclass[11pt]{article}
\usepackage{graphicx}
\usepackage{amsmath}
\usepackage{amssymb}
\usepackage{amsfonts}
\usepackage{amsthm}
\usepackage{xcolor}
\usepackage{enumitem}
\usepackage{placeins}
\usepackage{hyperref}
\numberwithin{equation}{section}
\def\R{\mathbb{R}}
\def\P{\mathbb{P}}

\def\S{\mathcal{S}_U}
\def\SD{\mathcal{S}_{\nabla U}}
\def\G{\mathcal{G}}

\def\neweq#1{\begin{equation}\label{#1}}
\def\endeq{\end{equation}}
\def\eq#1{(\ref{#1})}
\def\eps{\varepsilon}

\def\eps{{\varepsilon}}

\hypersetup{
    colorlinks=true,
    linkcolor=blue,
    filecolor=magenta,
    urlcolor=cyan,
}

\newtheorem{theorem}{Theorem}[section]

\newtheorem{lemma}[theorem]{Lemma}

\newtheorem{definition}[theorem]{Definition}

\title{Finite-time blow-up for the forced 2D Navier-Stokes equations}
\author{Filippo GAZZOLA\footnote{Dipartimento di Matematica (Dipartimento di Eccellenza MUR 2023-2027)\ -- Politecnico di Milano
-- Piazza Leonardo da Vinci 32, 20133, Milano, Italy -- filippo.gazzola@polimi.it}}
\date{}
\begin{document}
\maketitle

\begin{abstract}
We provide explicit examples showing that the existing assumptions in literature on the force are sharp for the stated properties
of strong solutions to the 2D Navier-Stokes equations. If they fail to be satisfied, finite time blow-up may occur in different forms.
\end{abstract}

{\small
\textbf{Keywords:} 2D Navier-Stokes equations, finite time blow-up, global uniqueness.\par
\textbf{AMS 2010 Subject Classification:} 35Q30, 76D03.
}

\baselineskip13pt

\section{Introduction}

In the entire plane, the forced evolution Navier-Stokes equations read
\begin{equation}\label{ns}
U_t-\Delta U+(U\cdot\nabla)U+\nabla P=f\, ,\quad\nabla\cdot U=0\quad\mbox{ in }\R^2\times\R_+
\end{equation}
and are complemented with an initial condition (here and in the sequel, we denote $\xi=(x,y)\in\R^2$)
\begin{equation}\label{ic}
U(\xi,0)=U_0(\xi)\quad \mbox{in }\R^2.
\end{equation}
The finite time blow-up of the solutions to this problem in $\R^3$ was analysed in \cite{GaGa}. In dependence of the integrability of $f$,
different blow-up conditions were considered, all of them complementing well-known no-blow-up statements. The main idea was the
construction of solutions inspired to the Talenti bubbles \cite{talenti} in the critical Lane-Emden semilinear elliptic equation with
a time-dependent concentration parameter.\par
In $\R^2$ several changes have to be expected, both concerning the equations and the functional analytic background.
The most important differences are existence, smoothness and global uniqueness results for Leray-Hopf solutions, regardless of additional
constraints \cite{farwig,Galdi-evol,ladybook,leray,lions,sohr} for smooth $f$. More recently, in \cite{aaffgg} the enstrophy blow-up of (3D) space-periodic solutions
to \eq{ns}-\eq{ic} (for $f=0$) appeared to be a consequence of the energy equipartition among modes in each spatial cube, but it was also shown that
equipartition of energy does not appear for 2D flows, preventing the enstrophy blow-up. Another crucial difference is the lack of interest
in the Lane-Emden equation since the embedding $\mathcal{D}^{1,2}(\R^2)\subset L^q(\R^2)$ has no critical exponent; this is also the reason why
an assumption such as $f\in L^2_{\rm loc}(\R_+;H^{-1}(\R^2))$ appears quite delicate.\par
These remarks, together with the next (known) statements by Ladyzhenskaya \cite{ladybook}, motivate the present paper.

\begin{theorem}\label{sohr2D}{\bf (Sohr \cite[Theorem 4.2.1, p.344]{sohr}).}
Let $U_0\in L^2_\sigma(\R^2)$ and $f\in L^1_{\rm loc}(\R_+;L^2(\R^2))$. Then \eqref{ns}-\eqref{ic} admits a unique weak solution
in $\R^2\times\R_+$ which, moreover, is strong.
\end{theorem}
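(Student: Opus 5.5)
The proof follows the classical Leray--Hopf/Ladyzhenskaya scheme in three steps: existence of a weak solution via Galerkin approximations and the energy inequality, uniqueness via the two-dimensional Ladyzhenskaya inequality, and strongness in the sense of Serrin's class. Here ``strong'' means $U\in L^4(0,T;L^4(\R^2))$ for every $T>0$, equivalently $U\in L^s(0,T;L^q)$ with $2/s+2/q=1$. We also show that $U$ satisfies the energy equality.

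\emph{Existence.} We take a Galerkin basis of $H^1_\sigma(\R^2)$, or we first solve on balls or with a smoothing of the nonlinearity and then pass to the limit. Testing the approximate equation with $U$ and using $((U\cdot\nabla)U,U)=0$ gives
$$\tfrac12\tfrac{d}{dt}\|U\|_2^2+\|\nabla U\|_2^2=(f,U)\le\|f\|_2\|U\|_2 .$$
Since $f\in L^1_{\rm loc}(\R_+;L^2)$, we may use $\tfrac{d}{dt}\|U\|_2\le\|f\|_2$ (this is why $L^1$ in time, and not $L^2$ in time with values in $H^{-1}$, is the natural assumption). This yields
$$\|U(t)\|_2\le\|U_0\|_2+\int_0^t\|f\|_2=:K(t),\qquad \int_0^T\|\nabla U\|_2^2\le K(T)^2 ,$$
uniformly in the approximation. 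The time-derivative bound comes from Ladyzhenskaya's inequality $\|v\|_4^2\le C\|v\|_2\|\nabla v\|_2$, which gives $(U\cdot\nabla)U=\nabla\cdot(U\otimes U)\in L^2(0,T;H^{-1})$; in addition $f\in L^1(0,T;L^2)$. Together these bound $U_t$ in $L^1(0,T;H^{-1}_{\rm loc})$. Aubin--Lions compactness on bounded domains plus a diagonal argument then lets us pass to the limit in the nonlinear term, producing a weak solution in $L^\infty(0,T;L^2_\sigma)\cap L^2(0,T;H^1)$ for every $T$.

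\emph{Strongness and energy equality.} Every weak solution is in $L^4(0,T;L^4)$ because
$$\int_0^T\|U\|_4^4\le C\sup_t\|U\|_2^2\int_0^T\|\nabla U\|_2^2<\infty .$$
Hence $U\otimes U\in L^2(L^2)$, and $U_t\in L^2(H^{-1})+L^1(L^2)$. By a Lions--Magenes type lemma, adapted to the sum space by duality with $U\in L^2(H^1)\cap L^\infty(L^2)$, $U$ is (after modification on a null set) in $C([0,T];L^2)$ and $t\mapsto\|U\|_2^2$ is absolutely continuous. This justifies testing with $U$ itself, which gives the energy equality. Strongness in Serrin's sense is then immediate.

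\emph{Uniqueness.} Let $U,V$ be two weak solutions with the same data and set $W=U-V$. By the justification above we may test the difference equation with $W$, which gives
$$\tfrac12\tfrac{d}{dt}\|W\|_2^2+\|\nabla W\|_2^2=-((W\cdot\nabla)V,W)\le\|\nabla V\|_2\|W\|_4^2\le C\|\nabla V\|_2\|W\|_2\|\nabla W\|_2 .$$
Young's inequality turns this into $\tfrac{d}{dt}\|W\|_2^2\le C\|\nabla V\|_2^2\|W\|_2^2$. Since $\|\nabla V\|_2^2\in L^1(0,T)$, Gronwall's inequality gives $W\equiv0$ on every $[0,T]$.

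The main obstacle is the rigorous justification of the testing step, namely the continuity and energy equality when $f$ is only $L^1$ in time. It requires the duality pairing between $U_t\in L^2(H^{-1})+L^1(L^2)$ and $U\in L^2(H^1)\cap L^\infty(L^2)$. We handle it by mollifying in time and passing to the limit term by term, and the $L^\infty(L^2)$ bound controls the $L^1(L^2)$ part of $U_t$.
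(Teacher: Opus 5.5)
Your proposal is correct. It is the classical Leray--Hopf/Ladyzhenskaya argument: the a priori bound $\|U(t)\|_2\le\|U_0\|_2+\int_0^t\|f\|_2$; the interpolation $L^\infty(0,T;L^2)\cap L^2(0,T;H^1)\subset L^4(0,T;L^4)$, which makes every weak solution strong; continuity in $L^2$ via a Lions--Magenes-type lemma for $U_t\in L^2(0,T;(H^1_\sigma)')+L^1(0,T;L^2_\sigma)$ (the dual of $H^1_\sigma$ rather than $H^{-1}$, since the pressure is not controlled); and Gronwall for the difference, where $f$ cancels, so the plain Lions--Magenes lemma already suffices there. This is the argument behind the result, which the paper only quotes from Sohr without a proof of its own; its sole trace in the paper is the remark that \eqref{ULinftyL2}--\eqref{UL2H1} imply \eqref{UL4L4}, which is exactly your strongness step.
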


Here and in the sequel, $L^2_\sigma(\R^2)$ denotes the subspace of divergence-free vector fields in $L^2(\R^2)$, whereas for the characterisation
of weak/strong solutions, we refer to Definition \ref{weakstrong} below. Roughly speaking, strong solutions are weak solutions with an additional
integrability property. It is our first purpose to show that the statement in Theorem \ref{sohr2D} is sharp, since finite time blow-up
may occur in different forms. This may appear surprising in view of the above mentioned no-blow-up statements for smooth $f$:
hence, the force itself may induce blow-up for $V$. A ``dual conclusion'' is reached in connection with

\begin{theorem}\label{sohr2D2}{\bf (Sohr \cite[Theorem 1.8.3, p.304]{sohr}).}
Let $U_0\in H^1_\sigma(\R^2)$ and $f\in L^2_{\rm loc}(\R_+;L^2(\R^2))$. Then \eqref{ns}-\eqref{ic} admits a unique weak solution
in $\R^2\times\R_+$ which is strong and has locally bounded enstrophy.
\end{theorem}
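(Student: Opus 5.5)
Since Theorem \ref{sohr2D2} is a known result quoted from \cite{sohr}, I would prove it by the classical Galerkin/energy method in two stages. The existence of a unique weak solution that is strong comes directly from Theorem \ref{sohr2D}: indeed $H^1_\sigma(\R^2)\subset L^2_\sigma(\R^2)$ and $L^2_{\rm loc}(\R_+;L^2(\R^2))\subset L^1_{\rm loc}(\R_+;L^2(\R^2))$. So only the locally bounded enstrophy needs proof, i.e.\ $\nabla U\in L^\infty(0,T;L^2(\R^2))$ (and also $\Delta U\in L^2(0,T;L^2(\R^2))$) for every $T>0$.

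For the enstrophy bound I would work with Galerkin approximations $U_m$, or with smooth approximations of $U_0$ and $f$, which are smooth enough to justify the computations. I would test the equation against $-\Delta U_m$ (the Stokes operator, which is the Laplacian on $\R^2$ for divergence-free fields). The pressure term drops out by the divergence-free condition and integration by parts. The key 2D fact is the identity $\int_{\R^2}(U\cdot\nabla)U\cdot\Delta U\,d\xi=0$ for smooth divergence-free $U$ decaying at infinity. It follows from writing the equation in vorticity form: $\omega=\partial_xU_2-\partial_yU_1$ satisfies $\int (U\cdot\nabla)\omega\,\omega=0$, and $\|\nabla U\|_2=\|\omega\|_2$. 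This gives
\begin{equation*}
\tfrac12\tfrac{d}{dt}\|\nabla U_m\|_2^2+\|\Delta U_m\|_2^2=-\int_{\R^2} f\cdot\Delta U_m\,d\xi\le \tfrac12\|f\|_2^2+\tfrac12\|\Delta U_m\|_2^2 .
\end{equation*}
Integrating over $(0,t)$ then yields
\begin{equation*}
\|\nabla U_m(t)\|_2^2+\int_0^t\|\Delta U_m\|_2^2\le \|\nabla U_0\|_2^2+\int_0^t\|f\|_2^2,
\end{equation*}
a bound that is uniform in $m$ and finite on every bounded interval precisely because $f\in L^2_{\rm loc}(\R_+;L^2)$. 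This is where the stronger hypothesis on $f$, compared with Theorem \ref{sohr2D}, is used.

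Next I would pass to the limit. Weak-star compactness in $L^\infty(0,T;H^1)$ and weak compactness in $L^2(0,T;H^2)$ give a limit that satisfies the same bounds by lower semicontinuity. By the uniqueness part of Theorem \ref{sohr2D}, this limit coincides with the weak solution $U$, so $U$ has locally bounded enstrophy.

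The main obstacle I expect is the rigorous justification on the whole plane, since the Galerkin basis must be chosen compatibly with $-\Delta$ on an unbounded domain. I would handle it either by using the eigenfunctions of a Friedrichs mollified (Leray-$\alpha$ type) regularisation, or by solving on large discs or tori and exhausting $\R^2$, with the estimate above being independent of the domain. Alternatively, one can regularise the data, use the smooth solutions given by the classical theory, and conclude by the stability estimate underlying uniqueness in 2D: $\|U-\tilde U\|_2$ is controlled via Ladyzhenskaya's inequality $\|v\|_4^2\le C\|v\|_2\|\nabla v\|_2$ and Gronwall.
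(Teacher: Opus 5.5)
The paper does not prove this statement: it quotes it from Sohr's monograph and uses it only as the benchmark that Theorem~\ref{main2} shows to be sharp. Your proposal is a correct, self-contained proof along standard lines, and it takes a genuinely different route from simply citing the result.

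\textbf{What your argument does.} Existence, uniqueness and the strong property are inherited from Theorem~\ref{sohr2D}, because $H^1_\sigma(\R^2)\subset L^2_\sigma(\R^2)$ and $L^2_{\rm loc}(\R_+;L^2(\R^2))\subset L^1_{\rm loc}(\R_+;L^2(\R^2))$. So only the $H^1$ estimate is new. On the whole plane you can use the exact cancellation $\int_{\R^2}(U\cdot\nabla)U\cdot\Delta U\,d\xi=0$. This gives the Gronwall-free bound $\|\nabla U(t)\|_2^2+\int_0^t\|\Delta U\|_2^2\le\|\nabla U_0\|_2^2+\int_0^t\|f\|_2^2$.

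\textbf{What this buys.} In a general-domain treatment with no-slip boundaries, this cancellation is not available. There one has to combine Ladyzhenskaya's inequality, $H^2$-regularity of the Stokes operator, and Gronwall together with the energy bound. Your estimate also shows directly that the enstrophy is controlled by $\int_0^t\|f\|_2^2$. This is exactly the threshold that Theorem~\ref{main2} shows cannot be relaxed to $L^{2-\eps}$ in time.

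\textbf{One justification step needs correcting.} On $\R^2$ the Stokes operator has purely continuous spectrum. So there is no eigenfunction basis for a Galerkin scheme, and mollifying the nonlinearity (Leray--$\alpha$ type) does not create one. The Friedrichs scheme that works is Fourier truncation onto frequencies $|\kappa|\le m$. It commutes with $\Delta$ and with the Leray projector, so $-\Delta U_m$ is an admissible test function and the cancellation survives. Similarly, on discs with Dirichlet conditions, $\int(U\cdot\nabla)U\cdot AU$ (with $A$ the Stokes operator) does not vanish. Exhausting $\R^2$ by discs therefore needs the Ladyzhenskaya--Gronwall estimate, with constants uniform in the radius, rather than the clean bound above. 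Your torus exhaustion and your regularised-data-plus-stability argument are fine as stated, so the proof stands.
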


Here, $H^1_\sigma(\R^2)$ denotes the subspace of divergence-free vector fields in $H^1(\R^2)$, whereas for the definition of enstrophy,
we refer again to Definition \ref{weakstrong} below. We will prove that if the assumption on time-integrability of $f$ is weakened, then the
enstrophy may blow up in finite time.\par
This paper is organised as follows. In Section \ref{mainresults} we state our main results and we comment them in comparison to Theorems
\ref{sohr2D} and \ref{sohr2D2}. In Section \ref{prelim} we construct the solutions by ``playing'' with some parameters.
Sections \ref{planar} and \ref{planar2} contain the proofs of the main results: they are obtained through a delicate choice of the
parameters, which need to obey suitable constraints.

\section{Main results}\label{mainresults}

For $p\ge1$, let $L^p_\sigma(\R^2)$ be the subspace of $L^p(\R^2)$ of divergence-free vector fields.
The initial velocity $U_0$ in \eq{ic} will be taken in the following space of smooth functions
$$
\G_\sigma(\R^2)=\{U\in\G;\ \nabla\cdot U=0\mbox{ in }\R^2\}\quad\mbox{where}\quad
\G\, =\, \bigcap_{k\in\mathbb{N}}\,\bigcap_{q\in[2,\infty]}\,W^{k,q}(\R^2)\ \subset C^\infty(\R^2)\, .
$$

Let us define weak and strong solutions.

\begin{definition}\label{weakstrong}
Let $\mathcal{D}=\{\phi\in C^\infty_c\big(\R^2\times\R_+\big)\mbox{ s.t. }\nabla\cdot\phi=0\mbox{ in }\R^2\times\R_+\}$.
For given $U_0\in L^2_\sigma(\R^2)$ and $f\in L^1_{\rm loc}(\R_+;L^2(\R^2))$, a vector field $U=U(\xi,t)$ is a
global weak solution to \eqref{ns}-\eqref{ic} in $\R^2\times\R_+$ if
\neweq{functionalcond}
U\in L^\infty_{\rm loc}(\R_+;L^2_\sigma(\R^2))\cap L^2_{\rm loc}(\R_+;H^1(\R^2))\, ,
\endeq
\neweq{weakform}
\int_{0}^{\infty}\left\{\int_{\R^2}\big[\nabla V:\nabla\phi+(V\cdot\nabla)V\cdot\phi-V\cdot\phi_t\big]\right\}\, dt=
\int_{0}^{\infty}\int_{\R^2} f\, \phi+\int_{\R^2}U_0\cdot\phi(0)\qquad\forall\phi\in\mathcal{D}\, .
\endeq
If $U\in L^4_{\rm loc}(\R_+;L^4_\sigma(\R^2))$, then $U$ is called a strong solution to \eqref{ns}-\eqref{ic} in $\R^2\times\R_+$ and,
in such case, the following energy identity holds
\neweq{energyineq}
\|U(t)\|_{L^2(\R^2)}^2+2\int_{0}^{t}\|\nabla U(s)\|_{L^2(\R^2)}^2ds\le\|V_0\|_{L^2(\R^2)}^2+2\int_{0}^{t}\int_{\R^2}f\, \phi
\qquad\forall t\in(0,T]\, .
\endeq
If finite, the quantity $\|\nabla U(t)\|_{L^2(\R^2)}^2$ is called the enstrophy of $U$.
\end{definition}

Bearing in mind Theorem \ref{sohr2D}, we may now state our first main result.

\begin{theorem}\label{main}
For any $\eps>0$ and $T>0$ there exists $f_\eps\in L^1_{\rm loc}(\R_+,L^2(\R^2))$ and $U_0\in\G_\sigma(\R^2)$ such that the unique strong solution
$U$ to \eqref{ns}-\eqref{ic}, as given by Theorem \ref{sohr2D}, satisfies
$$
U\not\in L^\infty(0,T;L^{2+\eps}(\R^2))\, ,\quad U\not\in L^{4+\eps}(0,T;L^4(\R^2))\, ,\quad U\not\in L^4(0,T;L^{4+\eps}(\R^2))\, ,
$$
$$
\nabla U\not\in L^2(0,T;L^{2+\eps}(\R^2))\, ,\quad \nabla U\not\in L^{2+\eps}(0,T;L^2(\R^2))\, .
$$
\end{theorem}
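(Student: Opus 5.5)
We construct $U$ explicitly and then \emph{define} the force through the equation. This is the inverse approach of \cite{GaGa}: a 2D analogue of a concentrating profile.

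\textbf{Ansatz.} Fix a smooth radial stream-function profile and a concentration parameter $\lambda(t)>0$ on $[0,T)$ with $\lambda(t)\to\infty$ as $t\to T^-$. Set
$$
U(\xi,t)=\lambda(t)^{\alpha}\,\nabla^\perp\Psi\big(\lambda(t)\xi\big),\qquad \nabla^\perp=(-\partial_y,\partial_x),
$$
where $\Psi$ is radial, smooth, and rapidly decaying (e.g.\ a Gaussian), so that $U(\cdot,t)\in\G_\sigma(\R^2)$ for $t<T$ and $U_0=U(\cdot,0)$. The vorticity is then radial, so the flow is a circular (swirling) vortex. For such a field
$$
(U\cdot\nabla)U=\nabla\Pi
$$
for a radial pressure $\Pi$. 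We therefore absorb the nonlinearity entirely into $P$ and put
$$
f:=U_t-\Delta U ,
$$
which is divergence-free and smooth for $t<T$. We may continue $U$ (and hence $f$) after $T$ arbitrarily, or simply note that the statements only concern $(0,T)$.

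\textbf{Scaling.} With $\|g(\lambda\cdot)\|_{L^q(\R^2)}=\lambda^{-2/q}\|g\|_{L^q}$ we get
$$
\|U\|_{L^q}\sim\lambda^{\alpha+1-2/q},\qquad \|\nabla U\|_{L^q}\sim\lambda^{\alpha+2-2/q},\qquad \|\Delta U\|_{L^2}\sim\lambda^{\alpha+2}.
$$
For the time derivative,
$$
U_t=\frac{\lambda'}{\lambda}\,\big(\alpha\,U+\xi\cdot\nabla U\big),
\qquad\text{so}\qquad
\|U_t\|_{L^2}\sim\frac{|\lambda'|}{\lambda}\,\lambda^{\alpha}.
$$
The requirement $f\in L^1(0,T;L^2)$ thus reduces to
$$
\int_0^T\Big(\lambda^{\alpha+2}+|\lambda'|\lambda^{\alpha-1}\Big)\,dt<\infty .
$$
Since $\lambda$ is monotone near $T$, the second term equals $\int\lambda^{\alpha-1}\,d\lambda$, which is finite iff $\alpha<0$.

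\textbf{Choice of parameters.} Take $\lambda(t)=(T-t)^{-\beta}$ with $\beta>0$, and $\alpha=-\gamma$ with $\gamma>0$. The integrability conditions then read as follows.

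\emph{Force.} $f\in L^1(0,T;L^2)$ requires $\beta(2-\gamma)<1$, and $\gamma>0$ handles the $U_t$ term.

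\emph{Blow-up targets.} We need the following to fail:
\begin{itemize}[label={}]
\item $\sup_t\|U\|_{L^{2+\eps}}<\infty$, which fails iff $1-\gamma-\tfrac{2}{2+\eps}>0$;
\item $\|U\|_{L^4}\in L^{4+\eps}$, which fails iff $4\beta(\tfrac12-\gamma)\ge 4/(4+\eps)$, roughly;
\item $\|U\|_{L^{4+\eps}}\in L^4$, similarly;
\item $\|\nabla U\|_{L^{2+\eps}}\in L^2$, similarly;
\item $\|\nabla U\|_{L^2}\in L^{2+\eps}$, similarly.
\end{itemize}

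\emph{Consistency with Theorem \ref{sohr2D}.} The solution must genuinely lie in the class of that theorem, namely
$$
\sup_t\|U\|_{L^2}<\infty,\quad \nabla U\in L^2L^2,\quad U\in L^4L^4 .
$$
Since $\|U\|_{L^2}\sim\lambda^{-\gamma}$, the first holds. The second requires $2\beta(1-\gamma)\le 1$, and the third requires $4\beta(\tfrac12-\gamma)<1$. Uniqueness then identifies our $U$ with the solution given by Theorem \ref{sohr2D}.

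\textbf{Main obstacle.} The hard part is that each blow-up is borderline against a finiteness condition. For instance, $\nabla U\in L^2L^2$ must hold while $\nabla U\notin L^{2+\eps}L^2$. With pure powers, $\gamma$ and $\beta$ can only be tuned so that each borderline is attained with a small $\eps$-dependent margin; this is acceptable because $f_\eps$ is allowed to depend on $\eps$.

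\begin{itemize}[label={}]
\item First, I would try setting $\beta(1-\gamma)=\tfrac12-\delta$ with $\gamma$ small and $\delta=\delta(\eps)$. This gives $2\beta(1-\gamma)<1$ but $(2+\eps)\beta(1-\gamma)>1$, and I would then check each of the remaining inequalities for small $\gamma$ and $\delta$.
\item If a pure-power $\lambda$ cannot fit all constraints simultaneously, the fallback is a logarithmically corrected $\lambda$, e.g. $\lambda=(T-t)^{-1/2}|\log(T-t)|^{-\mu}$. With this choice the $L^2$-in-time integrals converge while every $L^{2+\eps}$-in-time integral diverges, for all $\eps$ at once.
\item The remaining checks are that $f\in L^1(0,T;L^2)$ still holds, which needs $\lambda^{2-\gamma}$ integrable, and that the $\lambda'$ term is integrable. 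These are where the delicate choice of parameters is decided.
\end{itemize}
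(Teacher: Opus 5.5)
Your construction is essentially the paper's. Its field $\frac{(T-t)^{2\delta}(y,-x)}{[(T-t)^{2\gamma}+|\xi|^2]^2}$ is exactly $\lambda^{\alpha+1}(\nabla^\perp\Psi)(\lambda\xi)$ with $\Psi(z)=\frac{1}{2(1+|z|^2)}$, $\lambda=(T-t)^{-\gamma}$ and $\alpha=2-2\delta/\gamma$. Thus your $(\beta,\gamma)$ is its $(\gamma,2\delta/\gamma-2)$, your force conditions $\gamma>0$, $\beta(2-\gamma)<1$ are precisely \eqref{uppergamma}, and your corner $\beta\approx\frac12$, $\gamma\approx0^+$ is its vertex $(\frac12,\frac12)$; absorbing the radial convection into the pressure only drops the constraint $\gamma<\frac{4\delta+1}{6}$, which \eqref{uppergamma} already implies.

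The check you leave open closes with pure powers. Take $\beta=\frac{1-\theta}{2-\gamma}$ and let $\gamma,\theta\to0^+$: the quantities $(2+\eps)\beta(1-\gamma)$, $(4+\eps)\beta(\frac12-\gamma)$, $4\beta(1-\gamma-\frac{2}{4+\eps})$, $2\beta(2-\gamma-\frac{2}{2+\eps})$ tend to $\frac{2+\eps}{2},\frac{4+\eps}{4},\frac{2(2+\eps)}{4+\eps},\frac{2+2\eps}{2+\eps}$, all $>1$, and $\gamma<\frac{\eps}{2+\eps}$ gives the $L^\infty L^{2+\eps}$ blow-up. So the logarithmic fallback is not needed. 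Two small fixes remain: $2\beta(1-\gamma)\le1$ must be a strict inequality, and you should extend by $U=f=0$ for $t\ge T$ (legitimate because $\|U(t)\|_{L^2}\to0$) rather than ``arbitrarily''.
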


In particular, the only $L^p$-norm that remains locally bounded is the $L^2$-norm. The effective integrability of a weak solution
$U\in L^r_{\rm loc}(\R_+;L^q(\R^2))$ is measured by the Serrin number \cite{serrin2}
$$\S=\frac{2}{r}+\frac{2}{q}\, ,$$
see also the systematic use of it in \cite{farwig}. On the one hand, combined with \eq{energyineq}, the condition $\S\le1$ ensures uniqueness
of weak solutions \cite[Theorem 1.5.1, p.276]{sohr}; see also earlier work in \cite{giga,kozono,masuda,serrin2}. On the other hand, the
``break even'' is obtained when $r=q=4$, yielding $\S=1$, see Theorem \ref{sohr2D}. Theorem \ref{main} states that $\S<1$ may not be reached
under the sole assumption $f\in L^1_{\rm loc}(\R_+,L^2(\R^2))$. Based on the seminal 3D work by Beir{\~a}o da Veiga \cite{Hugo},
Galdi \cite[Remark 5.3]{Galdi-evol} (see also \cite[p.3248]{farwig}) considers the Serrin number for $\nabla U\in L^r_{\rm loc}(\R_+;L^q(\R^2))$:
it turns out that, if $\SD=2$ plus an additional assumption, then existence and uniqueness are ensured. The break even is now obtained when $r=q=2$,
as in Theorem \ref{sohr2D}. Theorem \ref{main} states that $\SD<2$ may not be reached under the sole assumption $f\in L^1_{\rm loc}(\R_+,L^2(\R^2))$.
All this shows that Theorem \ref{sohr2D} is sharp: {\em no additional integrability on the solution and its gradient have to be expected when}
$f\in L^1_{\rm loc}(\R_+,L^2(\R^2))$.\par
Our second main result, to be compared with Theorem \ref{sohr2D2}, reads

\begin{theorem}\label{main2}
For any $\eps\in(0,1]$ and $T>0$ there exists $f_\eps\in L^{2-\eps}_{\rm loc}(\R_+,L^2(\R^2))$ and $U_0\in\G_\sigma(\R^2)$ such that the unique
strong solution $U$ to \eqref{ns}-\eqref{ic}, as given by Theorem \ref{sohr2D2}, satisfies
$$
\lim_{t\to T}\|\nabla U(t)\|_{L^2(\R^2)}=+\infty\, .
$$
\end{theorem}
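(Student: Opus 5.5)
The plan is to build an explicit self-similar-type solution in which the nonlinearity is absorbed entirely by the pressure, and then \emph{define} the force by the linear part of the equation. Fix the radial profile $\Psi(\xi)=e^{-|\xi|^2}$ and set $W=\nabla^\perp\Psi=(-\Psi_y,\Psi_x)$, which lies in $\G_\sigma(\R^2)$. Since $W$ is a purely azimuthal field with radial modulus, $W(\xi)=w(|\xi|)\,\xi^\perp/|\xi|$, we have $(W\cdot\nabla)W=-\frac{w(|\xi|)^2}{|\xi|}\frac{\xi}{|\xi|}$. This is the gradient of a radial function $\Pi$, and the same holds for any rescaling. I then take
$$
U(\xi,t)=\alpha(t)\,W(\beta(t)\xi),\qquad \alpha(t)=(T-t)^{-\gamma},\quad \beta(t)=(T-t)^{-\delta},\qquad t\in[0,T),
$$
with $P$ the corresponding radial pressure, so that $(U\cdot\nabla)U+\nabla P=0$, and I define $f:=U_t-\Delta U$. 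Then $U$ solves \eqref{ns}-\eqref{ic} with $U_0=T^{-\gamma}W(T^{-\delta}\xi)\in\G_\sigma(\R^2)$. For $t\ge T$, $f$ is extended by zero; only the behaviour on $[0,T)$ matters.

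The next step is to compute the norms by scaling. One has $\|U(t)\|_{L^2}=\frac{\alpha}{\beta}\|W\|_{L^2}$ and $\|\nabla U(t)\|_{L^2}=\alpha\|\nabla W\|_{L^2}$, so the enstrophy blows up as $t\to T$ as soon as $\gamma>0$; taking $\delta\ge\gamma$ keeps the energy bounded. For the force,
$$
\|f(t)\|_{L^2}\le \frac{|\alpha'|}{\beta}\|W\|_{L^2}+\frac{\alpha|\beta'|}{\beta^2}\|\xi\cdot\nabla W\|_{L^2}+\alpha\beta\|\Delta W\|_{L^2}\lesssim (T-t)^{-(1+\gamma-\delta)}+(T-t)^{-(\gamma+\delta)} .
$$

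The main point, and the only real obstacle, is choosing the exponents. With $s=\frac1{2-\eps}\in(\frac12,1]$, the condition $f\in L^{2-\eps}(0,T;L^2(\R^2))$ amounts to $1+\gamma-\delta<s$ and $\gamma+\delta<s$, together with $\gamma>0$. The naive choice $\delta=\gamma$, which keeps the energy constant, fails: the time derivative then produces $(T-t)^{-1}$, which is not even in $L^1$. One therefore has to let the concentration rate $\delta$ exceed the amplitude rate $\gamma$. The two conditions read $\delta-\gamma>1-s$ and $\delta+\gamma<s$, and they are compatible with $\gamma>0$ precisely because $1-s<s$, i.e.\ $\eps>0$. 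Concretely, take $\delta=\frac12$ and $0<\gamma<s-\frac12$; this also satisfies $\delta>\gamma$. On every $[0,T']$ with $T'<T$ the force is smooth and belongs to $L^2(0,T';L^2)$, so by Theorem \ref{sohr2D2} (uniqueness) the explicit $U$ \emph{is} the unique strong solution there. Hence $\lim_{t\to T}\|\nabla U(t)\|_{L^2}=\lim_{t\to T}(T-t)^{-\gamma}\|\nabla W\|_{L^2}=+\infty$.

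The remaining routine checks are that $\|\xi\cdot\nabla W\|_{L^2}$ and $\|\Delta W\|_{L^2}$ are finite (the profile is Gaussian), that $f$ is divergence free (it is, being a combination of $\partial_t$ and $\Delta$ of a divergence-free field), and that $\nabla P$ is genuinely in $L^2$ so that the weak formulation \eqref{weakform} holds.
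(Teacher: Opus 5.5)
Your proposal is correct and is essentially the paper's own construction. The field \eqref{U2D} is exactly of your self-similar form $\alpha(t)\Phi(\beta(t)\xi)$, with $\Phi(\eta)=(\eta_2,-\eta_1)(1+|\eta|^2)^{-2}$, $\alpha=(T-t)^{2\delta-3\gamma}$ and $\beta=(T-t)^{-\gamma}$, so your amplitude and concentration exponents are the paper's $3\gamma-2\delta$ and $\gamma$. Under this correspondence the triangle \eqref{blupenstrof} is exactly your system ($\gamma>0$, $1+\gamma-\delta<s$, $\gamma+\delta<s$ in your variables). Your other differences are cosmetic: the Gaussian profile, putting the radial convective term into the pressure rather than into $f$, and invoking uniqueness on each $[0,T']$ with $T'<T$, which is a sensible precaution since $f\notin L^2_{\rm loc}(\R_+;L^2(\R^2))$ and Theorem \ref{sohr2D2} does not apply literally up to time $T$.
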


This shows that also Theorem \ref{sohr2D2} is sharp: {\em by slightly relaxing the time-integrability of the force, the enstrophy of the
solution may blow up in finite time}.

\section{Preliminaries}\label{prelim}

We begin with an extremely useful calculus statement.

\begin{lemma}\label{calculus}
Let $\alpha,\gamma>0$, $k\in\mathbb{N}$, $p>k$. For the function $h:\R^2\times[0,T)\to\R$ defined by
$$
h(\xi,t)=\frac{(T-t)^{2\alpha}\, x^k}{\left[(T-t)^{2\gamma}+|\xi|^2\right]^{p/2}}\qquad\forall(\xi,t)\in\R^2\times[0,T)
$$
we have that
\neweq{general}
\forall q>\max\left\{1,\frac{2}{p-k}\right\}\quad\exists C_q>0\quad\mbox{s.t.}\quad\|h(t)\|_{L^q(\R^2)}^q
=C_q(T-t)^{2q\alpha+(kq+2-pq)\gamma}\quad\forall t\in[0,T)\, .
\endeq
Hence,
\neweq{iff2}
\begin{array}{rcl}
\forall r\ge1\qquad h\in L^r(0,T;L^q(\R^2)) & \Longleftrightarrow & 2q\alpha+(kq+2-pq)\gamma>-q/r\, ,\\
h\in L^\infty(0,T;L^q(\R^2)) & \Longleftrightarrow & 2q\alpha+(kq+2-pq)\gamma\ge0\, ,\\
\lim_{t\to T}\|h(t)\|_{L^q(\R^2)}=0 & \Longleftrightarrow & 2q\alpha+(kq+2-pq)\gamma>0\, .
\end{array}
\endeq
The same statements hold if $x^k$ is replaced by any homogeneous polynomial in $x,y$ of degree $k$.\par
\end{lemma}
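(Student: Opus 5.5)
The natural route is a scaling argument: the whole $t$-dependence of $h$ sits in the prefactor and in the concentration parameter $\lambda=(T-t)^{\gamma}>0$. For fixed $t\in[0,T)$ write
$$
\|h(t)\|_{L^q(\R^2)}^q=(T-t)^{2q\alpha}\int_{\R^2}\frac{|x|^{kq}}{(\lambda^2+|\xi|^2)^{pq/2}}\,d\xi\, .
$$
We then substitute $\xi=\lambda\eta$, so that $d\xi=\lambda^2d\eta$, $|x|^{kq}=\lambda^{kq}|\eta_1|^{kq}$ and $(\lambda^2+|\xi|^2)^{pq/2}=\lambda^{pq}(1+|\eta|^2)^{pq/2}$. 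This gives
$$
\|h(t)\|_{L^q(\R^2)}^q=(T-t)^{2q\alpha}\lambda^{kq+2-pq}\,C_q=C_q(T-t)^{2q\alpha+(kq+2-pq)\gamma},\qquad C_q:=\int_{\R^2}\frac{|\eta_1|^{kq}}{(1+|\eta|^2)^{pq/2}}\,d\eta ,
$$
which is exactly \eqref{general}, provided $C_q\in(0,\infty)$.

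Positivity of $C_q$ is clear, since the integrand is positive off the line $\eta_1=0$. Finiteness is the only genuine step. The integrand is continuous, hence locally integrable, so only the behaviour at infinity matters. There it is bounded by $|\eta|^{kq-pq}$, and in polar coordinates this is integrable at infinity iff $(p-k)q>2$, i.e.\ $q>2/(p-k)$; this is where the hypotheses $p>k$ and $q>2/(p-k)$ enter. The condition $q>1$ is not really needed for finiteness; it merely keeps $L^q$ a normed space. For the last sentence of the lemma, with $x^k$ replaced by a homogeneous polynomial $P$ of degree $k$, the same scaling gives $|P(\lambda\eta)|^q=\lambda^{kq}|P(\eta)|^q$. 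Moreover $|P(\eta)|\le c|\eta|^k$ and $P\not\equiv0$, so the same constant argument applies; we should exclude $P\equiv0$, or allow $C_q=0$ trivially.

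The equivalences \eqref{iff2} then follow from the explicit formula. Set $\beta=2q\alpha+(kq+2-pq)\gamma$, so that $\|h(t)\|_{L^q}=C_q^{1/q}(T-t)^{\beta/q}$.
\begin{itemize}
\item For $r\ge1$ we have $\int_0^T(T-t)^{r\beta/q}\,dt<\infty$ iff $r\beta/q>-1$, i.e.\ $\beta>-q/r$.
\item Boundedness on $[0,T)$ holds iff $\beta\ge0$, since for $\beta<0$ the norm blows up.
\item The norm vanishes as $t\to T$ iff $\beta>0$, since it is constant when $\beta=0$.
\end{itemize}
One small point is measurability of $t\mapsto h(t)$ into $L^q$, which holds by continuity of $h$ on $\R^2\times[0,T)$ and dominated convergence. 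The main (mild) obstacle is thus only the integrability threshold for $C_q$; everything else is scaling bookkeeping.
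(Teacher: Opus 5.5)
Your proof is correct and is essentially the paper's argument. The paper passes to polar coordinates and then rescales the radial variable $\rho=(T-t)^{\gamma}s$, whereas you rescale $\xi=(T-t)^{\gamma}\eta$ directly in the plane; this is the same scaling computation, with the same convergence threshold $q(p-k)>2$ and the same elementary deduction of \eqref{iff2} (your version also avoids the paper's misprint $s^{kq+2}$ for $s^{kq+1}$ and rightly notes that the polynomial must be nonzero for $C_q>0$).
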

\begin{proof} In what follows, $C>0$ denotes suitable constants that may also vary within the same equation.
Using polar coordinates, combined with the change of variables $\rho=(T-t)^{\gamma}s$ for fixed $t\in[0,T)$, we find
\begin{align*}
\|h(t)\|_{L^q(\R^2)}^q &= (T-t)^{2q\alpha}\int_{\R^2}\frac{|x|^{kq}\ d\xi}{[(T-t)^{2\gamma}+|\xi|^2]^{pq/2}}
=C(T-t)^{2q\alpha}\int_0^\infty\frac{\rho^{kq+1}\ d\rho}{[(T-t)^{2\gamma}+\rho^2]^{pq/2}} \\
&= C(T-t)^{2q\alpha+(kq+2-pq)\gamma}\int_0^\infty\frac{s^{kq+2}\ ds}{[1+s^2]^{pq/2}}=C_q(T-t)^{2q\alpha+(kq+2-pq)\gamma}
\qquad\forall q>\tfrac{2}{p-k}\, ,
\end{align*}
which proves \eq{general}. Then also the three statements in \eq{iff2} follow.\par
Finally, after switching to polar coordinates, the computations do not change if $x^k$ is replaced by any homogeneous polynomial of degree $k$.
\end{proof}

Let us explain how we intend to build the solution to \eq{ns}-\eq{ic}, starting from the velocity $U$, determining the associated pressure $P$,
ending with the force $f$.
Instead of the velocity vector field $V$ used in \cite{GaGa}, for any $T,\delta,\gamma,\ell>0$ the natural guess here is to take
$$
U(\xi,t)=\frac{(T-t)^{2\delta}}{[(T-t)^{2\gamma}+|\xi|^2]^{\ell/2}}\, (y,-x),
$$
so that $U\in C^\infty(\R^2\times[0,T))$ is divergence-free. Then we compute
$$
\Delta U(\xi,t)=\frac{\ell(T-t)^{2\delta}[(\ell-2)|\xi|^2-2(T-t)^{2\gamma}]}{[(T-t)^{2\gamma}+|\xi|^2]^{2+\ell/2}}\, (y,-x)\, ,
\qquad[(U\cdot\nabla)U](\xi,t)=\frac{-(T-t)^{4\delta}\ \xi}{\left[(T-t)^{2\gamma}+|\xi|^2\right]^\ell}\, .
$$
On the one hand, this suggests to take $\ell=2$ but, with this choice, $U(t)\not\in L^2(\R^2)$ for any $t\in[0,T)$ and
$U$ does not reach the minimal condition $U(t)\in L^2(\R^2)$ allowing to construct a weak solution to \eq{ns} in $\R^2\times[0,T)$.
On the other hand, the magic cancellation observed in \cite{GaGa} acts again in the convection: the exponent at the denominator of
$(U\cdot\nabla)U$ {\em is not} $1+\ell$ as expected when multiplying $U$ with a first order derivative. If we replace $U$ with the more general
vector field
$$
H(\xi,t)=\frac{(T-t)^{2\delta}}{[(T-t)^{2\gamma}+|\xi|^2]^{\ell/2}}\, (y,-kx)\qquad(k\neq0)\, ,
$$
we see that
$$
\nabla\cdot H=0\, \Longleftrightarrow\, k=1\, ,\qquad (H\cdot\nabla)H=\frac{-k(T-t)^{4\delta}\, \xi}{\left[(T-t)^{2\gamma}+|\xi|^2\right]^\ell}
+\ell(k-1)\, \frac{(T-t)^{4\delta}(xy^2,-kyx^2)}{\left[(T-t)^{2\gamma}+|\xi|^2\right]^{1+\ell}}
$$
and, hence, the magic cancellation is a consequence of the divergence-free condition.\par
In order to ensure that $U\in L^\infty(0,T;L^2(\R^2))$ we need the restriction $\ell>2$. Then, as explained in \cite{GaGa}, any
$\ell$ leads to the same results. For simplicity and elegance (the powers of the denominators of $\Delta U$ and $(U\cdot\nabla)U$ coincide),
we take $\ell=4$ so that
\neweq{U2D}
U(\xi,t)=\frac{(T-t)^{2\delta}}{[(T-t)^{2\gamma}+|\xi|^2]^2}\, (y,-x),
\endeq
From now on, we denote by
\neweq{Pj}
\P_j(\xi)\mbox{ any homogeneous polynomial of degree $j\ge0$ with respect to $|\xi|$.}
\endeq
For instance, $\P_1(\xi)=|\xi|$ and $\P_1(\xi)=x$ (with an abuse of notation).\par
Using this convention, we find
\neweq{Ut}
\nabla U(\xi,t)=\frac{(T-t)^{2\delta}\P_2(\xi)+(T-t)^{2\delta+2\gamma}}{\left[(T-t)^{2\gamma}+|\xi|^2\right]^3}\, ,\qquad
U_t(\xi,t)=\frac{(T-t)^{2\delta-1}[(T-t)^{2\gamma}\P_1(\xi)+\P_3(\xi)]}{\left[(T-t)^{2\gamma}+|\xi|^2\right]^3}\, ,
\endeq
\neweq{DeltaU}
\Delta U(\xi,t)=\frac{(T-t)^{2\delta}\P_3(\xi)+(T-t)^{2\delta+2\gamma}\P_1(\xi)}{[(T-t)^{2\gamma}+|\xi|^2]^4}\, ,
\qquad[(U\cdot\nabla)U](\xi,t)=\frac{(T-t)^{4\delta}\P_1(\xi)}{\left[(T-t)^{2\gamma}+|\xi|^2\right]^4}\, .
\endeq

By using in different ways Lemma \ref{calculus}, we obtain
\neweq{ULinftyL2}
U\in L^\infty(0,T;L^2(\R^2))\ \Longleftrightarrow\ \delta\ge\gamma\, ,
\endeq
\neweq{UL2H1}
\nabla U\in L^2(0,T;L^2(\R^2))\ \Longleftrightarrow\ 4\delta+1>6\gamma\, ,
\endeq
\neweq{UL4L4}
U\in L^4(0,T;L^4(\R^2))\ \Longleftrightarrow\ 8\delta+1>10\gamma\, .
\endeq
Just to reassure the reader, we notice that \eq{ULinftyL2}-\eq{UL2H1}$\Rightarrow$\eq{UL4L4}, as expected from \cite{lions}: indeed,
$$
8\delta+1=4\delta+1+4\delta>6\gamma+4\gamma=10\gamma\, .
$$
In particular, up to replacing $(T-t)$ with $|T-t|$, \eq{ULinftyL2}-\eq{UL2H1}-\eq{UL4L4} yield the following implication
\neweq{globalocal}
\gamma<\min\left\{\delta,\frac{4\delta+1}{6}\right\}\ \Longrightarrow\ U\in L^\infty_{\rm loc}(\R_+;L^2(\R^2))\cap L^4_{\rm loc}(\R_+;L^4(\R^2))\, ,
\quad\nabla U\in L^2_{\rm loc}(\R_+;L^2(\R^2))\, .
\endeq

The next step is to prove

\begin{lemma}\label{defforce2D}
{\rm {\bf (Integrability of ${\mathbf{g}}$).}} Let $U$ be as in \eqref{U2D} and let $g:=U_t-\Delta U+(U\cdot\nabla)U$. Then
\neweq{sumg2D}
\forall(r,q)\in[1,\infty)\times(1,\infty)\quad
g\in L^r(0,T;L^q(\R^2))\ \Longleftrightarrow\ \gamma<\min\left\{\frac{2\delta+1/r}{5-2/q},\frac{4\delta+1/r}{7-2/q},
\frac{2\delta-1+1/r}{3-2/q}\right\}\, .
\endeq
In particular,
\neweq{gL1L2}
\forall r\ge1\qquad g\in L^r(0,T;L^2(\R^2))\ \Longleftrightarrow\ \gamma<\min\left\{\frac{2\delta+1/r}{4},\frac{2\delta-1+1/r}{2}\right\}\, .
\endeq
\end{lemma}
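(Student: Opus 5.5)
We split $g$ into its homogeneous pieces, using the explicit formulas \eqref{Ut}--\eqref{DeltaU}, and apply Lemma \ref{calculus} to each piece. Over a common denominator,
$$
g=\frac{(T-t)^{2\delta-1}\big[(T-t)^{2\gamma}\P_1+\P_3\big]\big[(T-t)^{2\gamma}+|\xi|^2\big]
+(T-t)^{2\delta}\P_3+(T-t)^{2\delta+2\gamma}\P_1+(T-t)^{4\delta}\P_1}{[(T-t)^{2\gamma}+|\xi|^2]^4}\, .
$$
It is simpler to keep each term with its natural denominator. Each term then has the form of the function $h$ in Lemma \ref{calculus} with suitable $(\alpha,k,p)$:
\begin{itemize}
\item from $U_t$: $(\alpha,k,p)=(\delta-\tfrac12+\gamma,1,6)$ and $(\delta-\tfrac12,3,6)$;
\item from $\Delta U$: $(\delta,3,8)$ and $(\delta+\gamma,1,8)$;
\item from the convection: $(2\delta,1,8)$.
\end{itemize}
We compute the exponent $E=2q\alpha+(kq+2-pq)\gamma$ for each term and apply the criterion $E>-q/r$ from \eqref{iff2}. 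Dividing by $q$:
\begin{itemize}
\item $U_t$ terms: $2\delta-1+2\gamma+(2/q-5)\gamma>-1/r$ and $2\delta-1+(2/q-3)\gamma>-1/r$, i.e.\ $\gamma<\frac{2\delta-1+1/r}{3-2/q}$ for both;
\item $\Delta U$ terms: $2\delta+(2/q-5)\gamma>-1/r$, and the $(\delta+\gamma,1,8)$ term gives the same condition;
\item convection: $4\delta+(2/q-7)\gamma>-1/r$.
\end{itemize}
These are exactly the three bounds in \eqref{sumg2D}, and \eqref{gL1L2} follows by setting $q=2$. The integrability threshold $q>2/(p-k)$ in Lemma \ref{calculus} holds here, since $p-k\ge4$ and $q>1$.

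The sufficiency direction ($\Leftarrow$) is then immediate from the triangle inequality.

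The main obstacle is necessity ($\Rightarrow$). If one of the conditions fails, we cannot simply discard the other terms, because cancellations between terms could in principle improve integrability. Two ingredients handle this.
\begin{itemize}
\item \emph{Separation by scaling.} Substitute $\xi=(T-t)^\gamma\zeta$. Each of the three groups becomes $(T-t)^{a_i}G_i(\zeta)$ with distinct powers $a_i$, where the profiles $G_i$ are fixed nonzero functions:
\begin{itemize}
\item the $U_t$ group gives a profile in $\zeta$ whose power is $2\delta-1-3\gamma$ (the two $U_t$ pieces combine into one profile);
\item $\Delta U$ gives power $2\delta-5\gamma$;
\item convection gives power $4\delta-7\gamma$.
\end{itemize}
So $\|g(t)\|_{L^q}=(T-t)^{2\gamma/q}\big\|\sum_i (T-t)^{a_i}G_i\big\|_{L^q}$.
\item \emph{Dominant term.} As $t\to T$, the term with the most negative $a_i$ dominates. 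Concretely, $\|\sum_i c_iG_i\|_{L^q}\ge c\max_i|c_i|$ holds uniformly, because the $G_i$ are linearly independent. One checks this from their distinct structures: the convection profile is radial, $\propto\zeta/(1+|\zeta|^2)^4$, while the $U_t$ and $\Delta U$ profiles are rotational, $\propto(\zeta_2,-\zeta_1)$ times distinct radial factors. In fact radial and rotational fields are pointwise orthogonal, so $|g|\ge$ each of the two parts separately, which makes the lower bound elementary. The remaining concern is the two rotational groups; their radial factors differ ($(3|\zeta|^2-1)$-type versus $(|\zeta|^2-2)/(1+|\zeta|^2)$-type), so a nontrivial combination cannot vanish identically, and compactness of the unit sphere of coefficients gives the uniform constant.
\end{itemize}
Hence $\|g(t)\|_{L^q}\sim\max_i(T-t)^{a_i+2\gamma/q}$, which yields the equivalence.
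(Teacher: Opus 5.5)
Your proposal is correct, and it is more complete than the paper's proof. The exponent bookkeeping is the same as the paper's: apply Lemma \ref{calculus} separately to $U_t$, $\Delta U$ and $(U\cdot\nabla)U$ to get the three thresholds. The paper then simply declares \eqref{sumg2D} proved, so the only direction it actually justifies is sufficiency, via the triangle inequality.

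You go further on necessity, which the paper never addresses: it does not rule out cancellation among the summands. Your rescaling $\xi=(T-t)^\gamma\zeta$ gives $\|g(t)\|_{L^q}=(T-t)^{2\gamma/q}\|\sum_i (T-t)^{a_i}G_i\|_{L^q}$, with $a_1=2\delta-1-3\gamma$, $a_2=2\delta-5\gamma$, $a_3=4\delta-7\gamma$. Combined with equivalence of norms on the span of the linearly independent profiles $G_1,G_2,G_3$, this yields the two-sided bound $\|g(t)\|_{L^q}\simeq\max_i (T-t)^{a_i+2\gamma/q}$, and hence the full equivalence. The observation that radial and rotational fields are pointwise orthogonal makes the separation of the convection profile elementary. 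The paper's route buys brevity; yours buys an actual proof of ``$\Rightarrow$''.

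Three small inaccuracies, none of which affects the conclusion:
\begin{enumerate}
\item The cubic term of $U_t$ has $p-k=3$, not $p-k\ge4$. This is still fine, since $q>1>2/3$.
\item The powers $a_i$ are not always distinct: $a_1=a_2$ when $\gamma=\tfrac12$, $a_2=a_3$ when $\gamma=\delta$, and $a_1=a_3$ when $4\gamma=2\delta+1$. Your bound $\|\sum_i c_iG_i\|_{L^q}\ge c\max_i|c_i|$ never uses distinctness, so nothing breaks.
\item The radial factor of the $U_t$ profile is $(4\gamma-2\delta)-2\delta|\zeta|^2$ over $(1+|\zeta|^2)^3$, not a ``$(3|\zeta|^2-1)$-type'' factor. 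Its independence from the $\Delta U$ factor (same denominator) holds because it is a polynomial in $|\zeta|^2$ with nonzero leading coefficient $-2\delta$, whereas the $\Delta U$ factor is not a polynomial. The same observation shows the $U_t$ profile is nonzero.
\end{enumerate}
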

\begin{proof} For $U$ as in \eq{U2D}, from \eq{Ut} and \eq{DeltaU} we infer
$$
\Delta U\in L^r(0,T;L^q(\R^2))\ \Longleftrightarrow\ \gamma<\frac{2\delta+1/r}{5-2/q}\, ,\qquad
(U\cdot\nabla)U\in L^r(0,T;L^q(\R^2))\ \Longleftrightarrow\ \gamma<\frac{4\delta+1/r}{7-2/q}\, ,
$$
$$
U_t\in L^r(0,T;L^q(\R^2))\ \Longleftrightarrow\ \gamma<\frac{2\delta-1+1/r}{3-2/q}\, .
$$
This proves \eq{sumg2D}. When $q=2$ and $r\ge1$, \eq{sumg2D} becomes
$$
g\in L^r(0,T;L^2(\R^2))\ \Longleftrightarrow\ \gamma<\min\left\{\frac{2\delta+1/r}{4},\frac{4\delta+1/r}{6},
\frac{2\delta-1+1/r}{2}\right\}
$$
and we notice that
$$
\delta\le\frac{3}{2}-\frac{1}{r}\ \Longleftrightarrow\ \frac{2\delta-1+1/r}{2}\le\frac{4\delta+1/r}{6}\, ,\qquad
\delta\ge\frac{1}{2r}\ \Longleftrightarrow\ \frac{4\delta+1/r}{6}\ge\frac{2\delta+1/r}{4}\, .
$$
Since $\tfrac{1}{2r}\le\tfrac{3}{2}-\tfrac{1}{r}$ for all $r\ge1$, we obtain \eq{gL1L2}.
\end{proof}

We now introduce the pressure $P$ and the resulting force $f$.
Since our purpose is only to exhibit blow-up (in different forms) for $U$, we {\em fix} any $P\in C^1(\R^2\times(0,T))$ such that
$\nabla P\in L^2(0,T;L^2(\R^2))$ and we put $f:=g+\nabla P$. Then, from Lemma \ref{defforce2D} we infer that
$f\in L^r(0,T;L^2(\R^2))$ whenever \eq{gL1L2} holds.

\section{Proof of Theorem \ref{main}}\label{planar}

By imposing the existence conditions \eq{ULinftyL2}-\eq{UL2H1} at the same time as \eq{gL1L2} with $r=1$, the resulting upper bound
for $\gamma$ reads
\neweq{uppergamma}
\gamma<\min\left\{\delta,\frac{4\delta+1}{6},\frac{8\delta+1}{10},\frac{2\delta+1}{4}\right\}=
\min\left\{\delta,\frac{2\delta+1}{4}\right\}\, .
\endeq

Using again Lemma \ref{calculus}, the blow-up conditions to be shown in Theorem \ref{main} become
\neweq{blupLqin2D}
\forall p>2\qquad\lim_{t\to T}\|U(t)\|_{L^p(\R^2)}=+\infty\ \Longleftrightarrow\ \frac{2p\delta}{3p-2}<\gamma\, ,
\endeq
\neweq{blupU4p}
\forall p>2\qquad\|U\|_{L^4(0,T;L^{2p}(\R^2))}=+\infty\ \Longleftrightarrow\ \frac{2\delta+1/4}{3-1/p}\le\gamma\, ,
\endeq
\neweq{blupUp4}
\forall p>2\qquad\|U\|_{L^{2p}(0,T;L^4(\R^2))}=+\infty\ \Longleftrightarrow\ \frac{4p\delta+1}{5p}\le\gamma\, ,
\endeq
\neweq{blupgradin2D}
\forall p>2\qquad\|\nabla U\|_{L^2(0,T;L^p(\R^2))}=+\infty\ \Longleftrightarrow\ \frac{\delta+1/4}{2-1/p}\le\gamma\, ,
\endeq
\neweq{blupgradin2D2}
\forall p>2\qquad\|\nabla U\|_{L^p(0,T;L^2(\R^2))}=+\infty\ \Longleftrightarrow\ \frac{2p\delta+1}{3p}\le\gamma\, .
\endeq

We prove that, together with \eq{uppergamma}, these blow-up conditions determine triangles in the $(\delta,\gamma)$-plane.

\begin{lemma}\label{conditions}
Combined with \eqref{uppergamma}, each of the conditions \eqref{blupLqin2D}, \eqref{blupU4p}, \eqref{blupUp4}, \eqref{blupgradin2D},
\eqref{blupgradin2D2}, defines a triangle in the first quadrant of the $(\delta,\gamma)$-plane.
\end{lemma}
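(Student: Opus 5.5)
The plan is to reduce everything to elementary plane geometry. First I describe the region cut out by \eqref{uppergamma}. The two lines $\gamma=\delta$ and $\gamma=\frac{2\delta+1}{4}$ meet only at $(\tfrac12,\tfrac12)$. Hence
$$\min\left\{\delta,\tfrac{2\delta+1}{4}\right\}$$
equals $\delta$ for $\delta\le\tfrac12$ and $\tfrac{2\delta+1}{4}$ for $\delta\ge\tfrac12$. So \eqref{uppergamma} says that $(\delta,\gamma)$ lies strictly below a concave broken line. This broken line passes through the origin with slope $1$, bends at $(\tfrac12,\tfrac12)$, and continues with slope $\tfrac12$.

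Next I observe that each of the five blow-up conditions has the form $\gamma\ge c\,\delta+d$, or $\gamma>c\,\delta+d$ in the case of \eqref{blupLqin2D}, for explicit $c,d$ depending on $p$:
\begin{itemize}
\item for \eqref{blupLqin2D}: $c=\frac{2p}{3p-2}$ and $d=0$;
\item for \eqref{blupU4p}: $c=\frac{2}{3-1/p}$ and $d=\frac{1}{4(3-1/p)}$;
\item for \eqref{blupUp4}: $c=\frac45$ and $d=\frac{1}{5p}$;
\item for \eqref{blupgradin2D}: $c=\frac{1}{2-1/p}$ and $d=\frac{1}{4(2-1/p)}$;
\item for \eqref{blupgradin2D2}: $c=\frac23$ and $d=\frac{1}{3p}$.
\end{itemize}

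The core of the argument is a single geometric criterion. Suppose a line $\gamma=c\delta+d$ satisfies
$$\tfrac12<c<1,\qquad d\ge0,\qquad \tfrac c2+d<\tfrac12 .$$
Then the set of points above this line and below the broken line is a (nondegenerate) triangle in the first quadrant. I would verify this as follows.
\begin{itemize}
\item Since $c<1$ and $d\ge0$, the line starts on or above the branch $\gamma=\delta$ at $\delta=0$.
\item The line lies strictly below the kink $(\tfrac12,\tfrac12)$, by the third condition.
\item It therefore crosses $\gamma=\delta$ exactly once, at $\delta_1=\frac{d}{1-c}\in[0,\tfrac12)$.
\item Since $c>\tfrac12$, it crosses $\gamma=\frac{2\delta+1}{4}$ exactly once, at $\delta_2=\frac{1/4-d}{c-1/2}>\tfrac12$.
\item Outside $[\delta_1,\delta_2]$ the line lies above the broken line, so the region is empty there.
\end{itemize}
Hence the region is the triangle with vertices $(\delta_1,\delta_1)$, $(\tfrac12,\tfrac12)$ and $(\delta_2,c\delta_2+d)$. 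All coordinates are nonnegative, so the triangle lies in the first quadrant. Whether the boundary segments are included depends only on strict versus non-strict inequalities, which is irrelevant for the claim.

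It remains to check the three conditions for each case, using $p>2$ throughout. This is the only real computation, and I expect it to be the main (though mild) obstacle, since the exponents must line up exactly.
\begin{itemize}
\item For \eqref{blupLqin2D}, $d=0$ and $c=\frac{2p}{3p-2}$. We have $c<1$ if and only if $p>2$, and $c>\tfrac12$ always. The third condition reduces to $c<1$.
\item For \eqref{blupU4p}, $c\in(\tfrac45,1)$. Also $\tfrac c2+d=\frac{5/4}{3-1/p}<\tfrac12$ if and only if $p>2$.
\item For \eqref{blupUp4}, $\tfrac25+\frac1{5p}<\tfrac12$ if and only if $p>2$.
\item For \eqref{blupgradin2D}, $c\in(\tfrac12,\tfrac23)$. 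Also $\tfrac c2+d=\frac{3/4}{2-1/p}<\tfrac12$ if and only if $p>2$.
\item For \eqref{blupgradin2D2}, $\tfrac13+\frac1{3p}<\tfrac12$ if and only if $p>2$.
\end{itemize}
In each case the hypothesis $p>2$ is exactly what makes the triangle nondegenerate. This also explains why that hypothesis appears in \eqref{blupLqin2D}--\eqref{blupgradin2D2}.
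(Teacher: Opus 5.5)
Your proof is correct. The underlying idea is the same as the paper's: each region is cut out from below by one line and from above by the concave broken line $\min\{\delta,\frac{2\delta+1}{4}\}$. The organisation differs, though. The paper treats the five cases separately, writing the combined inequalities and listing three vertices for each. You instead prove one criterion ($\tfrac12<c<1$, $d\ge0$, $\tfrac c2+d<\tfrac12$ implies a nondegenerate triangle with vertices $(\delta_1,\delta_1)$, $(\tfrac12,\tfrac12)$, $(\delta_2,c\delta_2+d)$), and then check only three scalar inequalities per case.

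This buys two things. It makes transparent that $p>2$ is exactly what makes each triangle nondegenerate. It also keeps you faithful to \eqref{uppergamma}, which the displayed constraints in the paper's proof are not: in the last four cases they read $\gamma<\min\{\delta,\frac{4\delta+1}{6}\}$. This is why some of your vertices differ from the paper's:
\begin{itemize}
\item For \eqref{blupU4p} and \eqref{blupUp4}, the paper's third vertices lie on $\gamma=\frac{4\delta+1}{6}$, above the line $\gamma=\frac{2\delta+1}{4}$. Your formulas give $\bigl(\frac{2p-1}{2(p+1)},\frac{3p}{4(p+1)}\bigr)$ and $\bigl(\frac{5p-4}{6p},\frac{2p-1}{3p}\bigr)$ instead.
\item For \eqref{blupUp4}, your $\delta_1=d/(1-c)=\frac1p$ is the correct diagonal vertex. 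The paper repeats $\frac{p}{4p-4}$ from the previous case, and that point is not on $\gamma=\frac{4p\delta+1}{5p}$.
\item For \eqref{blupgradin2D} and \eqref{blupgradin2D2}, the paper's listed vertices agree with yours. This is only consistent with \eqref{uppergamma}: there $c\le\tfrac23$, so an upper bound $\frac{4\delta+1}{6}$ would leave the region unbounded.
\end{itemize}

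One small slip on your side: for \eqref{blupU4p} one has $c=\frac{2p}{3p-1}\in(\tfrac23,\tfrac45)$ when $p>2$, not $(\tfrac45,1)$. Since you only use $\tfrac12<c<1$, nothing changes.
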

\begin{proof}
\underline{Condition \eq{blupLqin2D}.} If we impose \eq{blupLqin2D} at the same time as \eq{uppergamma}, the resulting constraints read
\neweq{primoblup}
(\delta,\gamma)\in\R_+^2\, ,\qquad\frac{2p\delta}{3p-2}<\gamma<\min\left\{\delta,\frac{2\delta+1}{4}\right\}\, .
\endeq
Whatever $p>2$ is, the interval for $\gamma$ defined in \eq{primoblup} is nonempty and the region defined by \eq{primoblup} the
(nonempty, bounded) open triangle having vertices at
$$
(\delta,\gamma)\in\left\{\Big(0,0\Big);\Big(\tfrac12,\tfrac12\Big);\Big(\tfrac{3p-2}{2p+4},\tfrac{p}{p+2}\Big)\right\}\, .
$$

\underline{Condition \eq{blupU4p}.} If we impose \eq{blupU4p} (with strict inequality) at the same time as \eq{uppergamma}
the resulting constraints read
\neweq{secondoblup}
(\delta,\gamma)\in\R_+^2\, ,\qquad\frac{2\delta+1/4}{3-1/p}<\gamma<\min\left\{\delta,\frac{4\delta+1}{6}\right\}\, .
\endeq
Whatever $p>2$ is, the interval for $\gamma$ defined in \eq{secondoblup} is nonempty and the region in \eq{secondoblup} is the (nonempty, bounded)
open triangle having vertices at
$$
(\delta,\gamma)\in\left\{\Big(\tfrac{p}{4p-4},\tfrac{p}{4p-4}\Big);\Big(\tfrac12,\tfrac12\Big);\Big(\tfrac{3p-2}{8},\tfrac{p}{4}\Big)\right\}\, .
$$

\underline{Condition \eq{blupUp4}.} If we impose \eq{blupU4p} (with strict inequality) at the same time as \eq{uppergamma}
the resulting constraints read
\neweq{terzoblup}
(\delta,\gamma)\in\R_+^2\, ,\qquad\frac{4p\delta+1}{5p}<\gamma<\min\left\{\delta,\frac{4\delta+1}{6}\right\}\, .
\endeq
Whatever $p>2$ is, the interval for $\gamma$ defined in \eq{terzoblup} is nonempty and the region in \eq{terzoblup} is the (nonempty, bounded)
open triangle having vertices at
$$
(\delta,\gamma)\in\left\{\Big(\tfrac{p}{4p-4},\tfrac{p}{4p-4}\Big);\Big(\tfrac12,\tfrac12\Big);\Big(\tfrac{5p-6}{4p},\tfrac{p-1}{p}\Big)\right\}\, .
$$

\underline{Condition \eq{blupgradin2D}.} If we impose \eq{blupgradin2D} (with strict inequality) at the same time as \eq{uppergamma}
the resulting constraints read
\neweq{second2D}
(\delta,\gamma)\in\R_+^2\, ,\qquad\frac{\delta+1/4}{2-1/p}<\gamma<\min\left\{\delta,\frac{4\delta+1}{6}\right\}\, .
\endeq
Whatever $p>2$ is, the interval for $\gamma$ defined in \eq{second2D} is nonempty and the region in \eq{second2D} is the (nonempty, bounded)
open triangle having vertices at
$$
(\delta,\gamma)\in\left\{\Big(\tfrac{p}{4p-4},\tfrac{p}{4p-4}\Big);\Big(\tfrac12,\tfrac12\Big);\Big(\tfrac{p-1}{2},\tfrac{p}{4}\Big)\right\}\, .
$$

\underline{Condition \eq{blupgradin2D2}.} If we impose \eq{blupgradin2D2} (with strict inequality) at the same time as \eq{uppergamma}
the resulting constraints read
\neweq{third2D}
(\delta,\gamma)\in\R_+^2\, ,\qquad\frac{2\delta+1/p}{3}<\gamma<\min\left\{\delta,\frac{4\delta+1}{6}\right\}\, .
\endeq
Whatever $p>2$ is, the interval for $\gamma$ defined in \eq{third2D} is nonempty and the region \eq{third2D} is the (nonempty, bounded)
open triangle having vertices at
$$
(\delta,\gamma)\in\left\{\Big(\tfrac{1}{p},\tfrac{1}{p}\Big);\Big(\tfrac12,\tfrac12\Big);\Big(\tfrac{3p-4}{2p},\tfrac{p-1}{p}\Big)\right\}\, .
$$

Therefore, all the five regions are nonempty open triangles.\end{proof}

\begin{proof}[Proof of Theorem \ref{main}] For any $p>2$, all the five triangles defined in Lemma \ref{conditions} contain the open segment
with endpoints
$$
(\delta,\gamma)\in\left\{\left(\frac{1}{2},\frac{5p}{4(3p-1)}\right);\left(\frac{1}{2},\frac{1}{2}\right)\right\}\, .
$$
Hence, the intersection of the five triangles \eq{primoblup}-\eq{secondoblup}-\eq{terzoblup}-\eq{second2D}-\eq{third2D}, characterised by
$$
\max\left\{\frac{2p\delta}{3p-2},\frac{2\delta+1/4}{3-1/p},\frac{4p\delta+1}{5p},\frac{\delta+1/4}{2-1/p},\frac{2\delta+1/p}{3}\right\}
<\gamma<\min\left\{\delta,\frac{2\delta+1}{4}\right\}\, ,
$$
is nonempty and convex: its closure degenerates to the sole point $(\delta,\gamma)=(\tfrac12,\tfrac12)$ as $p\to2$.
Whatever $p>2$ is, if $(\delta,\gamma)\subset\R_+^2$ belongs to this (nonempty and convex) polygon,
then the divergence-free vector field in \eq{U2D} satisfies \eq{ULinftyL2}-\eq{UL2H1}-\eq{UL4L4}; moreover, $f\in L^1(0,T;L^2(\R^2))$.
Furthermore, all the five blow-up conditions in \eq{blupLqin2D}-\eq{blupU4p}-\eq{blupUp4}-\eq{blupgradin2D}-\eq{blupgradin2D2} hold.
For any $\eps>0$ it then suffices to take $p>2$ sufficiently close to $2$ and $(\delta,\gamma)$ in the above polygon to complete
the proof.\end{proof}

\section{Proof of Theorem \ref{main2}}\label{planar2}

By imposing the existence conditions \eq{ULinftyL2}-\eq{UL2H1} at the same time as \eq{gL1L2} with $r=2-\eps$, the resulting upper bound
for $\gamma$ reads
$$
\gamma<\min\left\{\delta,\frac{4\delta+1}{6},\frac{2\delta+1/(2-\eps)}{4},\frac{2\delta-1+1/(2-\eps)}{2}\right\}=
\min\left\{\frac{2\delta+1/(2-\eps)}{4},\frac{2\delta-1+1/(2-\eps)}{2}\right\}\, .
$$
Using \eq{Ut} and again Lemma \ref{calculus}, the blow-up condition to be shown in Theorem \ref{main2} becomes
$$
\lim_{t\to T}\|\nabla U(t)\|_{L^2(\R^2)}=C\, \lim_{t\to T}(T-t)^{2\delta-3\gamma}=+\infty\ \Longleftrightarrow\ \frac{2\delta}{3}<\gamma\, .
$$
These two conditions hold at the same time, whenever
\neweq{blupenstrof}
(\delta,\gamma)\in\R_+^2\, ,\qquad\frac{2\delta}{3}<\gamma<\min\left\{\frac{2(2-\eps)\delta+1}{4(2-\eps)},
\frac{2(2-\eps)\delta-1+\eps}{2(2-\eps)}\right\}\, .
\endeq
These inequalities define the open triangle in the $(\delta,\gamma)$-plane having vertices at
$$
(\delta,\gamma)\in\left\{\left(\frac{3(1-\eps)}{2(2-\eps)},\frac{1-\eps}{2-\eps}\right);\left(\frac{3-\eps}{2(2-\eps)},\frac{4-\eps}{4(2-\eps)}\right);
\left(\frac{3}{2(2-\eps)},\frac{1}{2-\eps}\right)\right\}
$$
Hence, for all $\eps\in(0,1]$ the triangle is open and nonempty: its closure degenerates to the sole point $(\delta,\gamma)=(\tfrac34,\tfrac12)$
as $\eps\to0$. For any $\eps\in(0,1]$ take $(\delta,\gamma)$ satisfying \eq{blupenstrof} to conclude the proof of Theorem \ref{main2}.
\par\bigskip\noindent
{\bf Data availability statement.} Data sharing not applicable to this article as no datasets were generated or analysed during the current study.
\par\noindent
{\bf Conflict of interest statement}. The Author declares that he has no conflict of interest.
\par\noindent
{\bf Acknowledgements.} The Author is supported by the MUR grant {\em Dipartimento di Eccellenza 2023-27} (Italy) and by INdAM.

\end{document}